\newtheorem{theorem}{Theorem}
\newtheorem{lemma}{Lemma}
\newtheorem{corollary}{Corollary}
\newtheorem*{remark}{Remark}
\title{\textbf{The Spherical Kapitza-Whitney Pendulum}}
\author{\textbf{Ivan Polekhin}%
  \thanks{Electronic address: \texttt{ivanpolekhin@mi-ras.ru}}}
\affil{Steklov Mathematical Institute of Russian Academy of Sciences,\\ Moscow, Russia}
\begin{document}

\maketitle

\section*{Abstract}

In this paper we study the global dynamics of the inverted spherical pendulum with a vertically vibrating suspension point in the presence of an external horizontal periodic force field. We do not assume that this force field is weak or rapidly oscillating. We prove that there always exists a non-falling periodic solution, i.e., there exists an initial condition such that the rod of the pendulum never becomes horizontal along the corresponding solution. We also show numerically that there exists an asymptotically stable non-falling solution for a wide range of parameters of the system.
\vspace{10pt}\\
\noindent\textbf{Keywords: } forced oscillations, Kapitza pendulum, Whitney pendulum, stabilization, vibrations.

\section{Introduction}

The simple planar pendulum with a vibrating pivot point has been one of the most well-studied nonlinear systems in the 20th century \cite{stephenson1908xx,bogolyubov1950perturbation,kapitsa1951pendulum,kapitsa1951dynamic} and still remains an object of mathematical research (see, for instance, the most recent papers \cite{artstein2021pendulum,araujo2021parametric,belyaev2021classical,cabral2021parametric}). A comprehensive overview of the works related to the topic and the history of the problem can be found in \cite{butikov2001dynamic,samoilenko1994nn}. In contrast, far less attention has been paid to the spherical pendulum with a vibrating pivot point, a natural generalization of the planar system. The work \cite{markeyev1999dynamics} by A.P. Markeev, to the best of our knowledge, is among the first papers to study this system. In particular, in this paper the existence of solutions which are impossible in the planar case is proved. To be more precise, it is shown that there exist solutions close to the conical motions of the pendulum for which the rod of the pendulum makes a constant angle with the vertical. Recently, these results have been developed by R.E. Grundy \cite{grundy2021two}. Another area of research related to the dynamics of the spherical pendulum stems from the problem of stabilization of an arbitrary position of the rod. Here the works by A.G. Petrov \textit{et al.} should be mentioned \cite{petrov2005equations,bulanchuk2010controlling}.

Our results belong rather to the area initially developed by A.P. Markeev: we also prove the existence of certain solutions. However, the setting of our problem significantly generalizes the original system: we consider the spherical pendulum with a vibrating suspension point under the action of a horizontal periodic force and prove the existence of periodic solutions in this system. Note that we do not assume that the horizontal force is weak or rapidly oscillating. In our considerations we mix analytical and topological techniques. We use a topological method developed in \cite{srzednicki1994periodic} to prove the existence of a periodic solution for some modified system and then use the classical results on averaging to prove that this solution remains in the original system. Our system can be considered as a natural combination of the classical Kapitza pendulum with the so-called Whitney pendulum, an inverted pendulum moving in the presence of an additional periodic horizontal force field \cite{bolotin2015calculus}. The current work is a development of an earlier paper \cite{polekhin2020method}, where the planar Kapitza-Whitney pendulum is studied.

The structure of the paper is as follows. First, we derive the equations of motion of the considered system and introduce the modified system, which will be used in the proof of our main result, presented in the successive section. We also obtain some numerical results concerning the existence of stable periodic solutions in the system. All technical lemmas are presented separately in the last section before the conclusion.

\section{Equations of motion}

Let us consider the classical spherical pendulum, a mass point moving on the surface of a sphere in a gravitational field. We additionally assume that
\begin{enumerate}
    \item There is a $T$-periodic horizontal force $P(t)$ acting on the point,
    \item The sphere moves vertically and the law of motion $h(t)$ is as follows
    $$h(t) = a \varepsilon \sin \frac{2\pi}{T}\frac{t}{\varepsilon}$$
    \item There is a force of viscous friction acting on the point.
\end{enumerate}

In the standard spherical coordinates, the position of the point is defined by the equations (Fig.~1)
\begin{align}
\begin{split}
    & x = \cos\theta\cos\varphi\\
    & y = h(t) + \cos\theta\sin\varphi\\
    & z = \sin\theta
\end{split}
\end{align}

Here and below we assume that the sphere has unit radius and the force of gravity is parallel to $Oy$-axis. It will allow us to avoid the degeneracies of the spherical coordinates in our considerations. We also assume that the mass of the point and the acceleration of gravity equal unity. These assumptions do not lead to any loss of generality.

\begin{figure}[h!]
  \centering
  \includegraphics[width=0.66\linewidth]{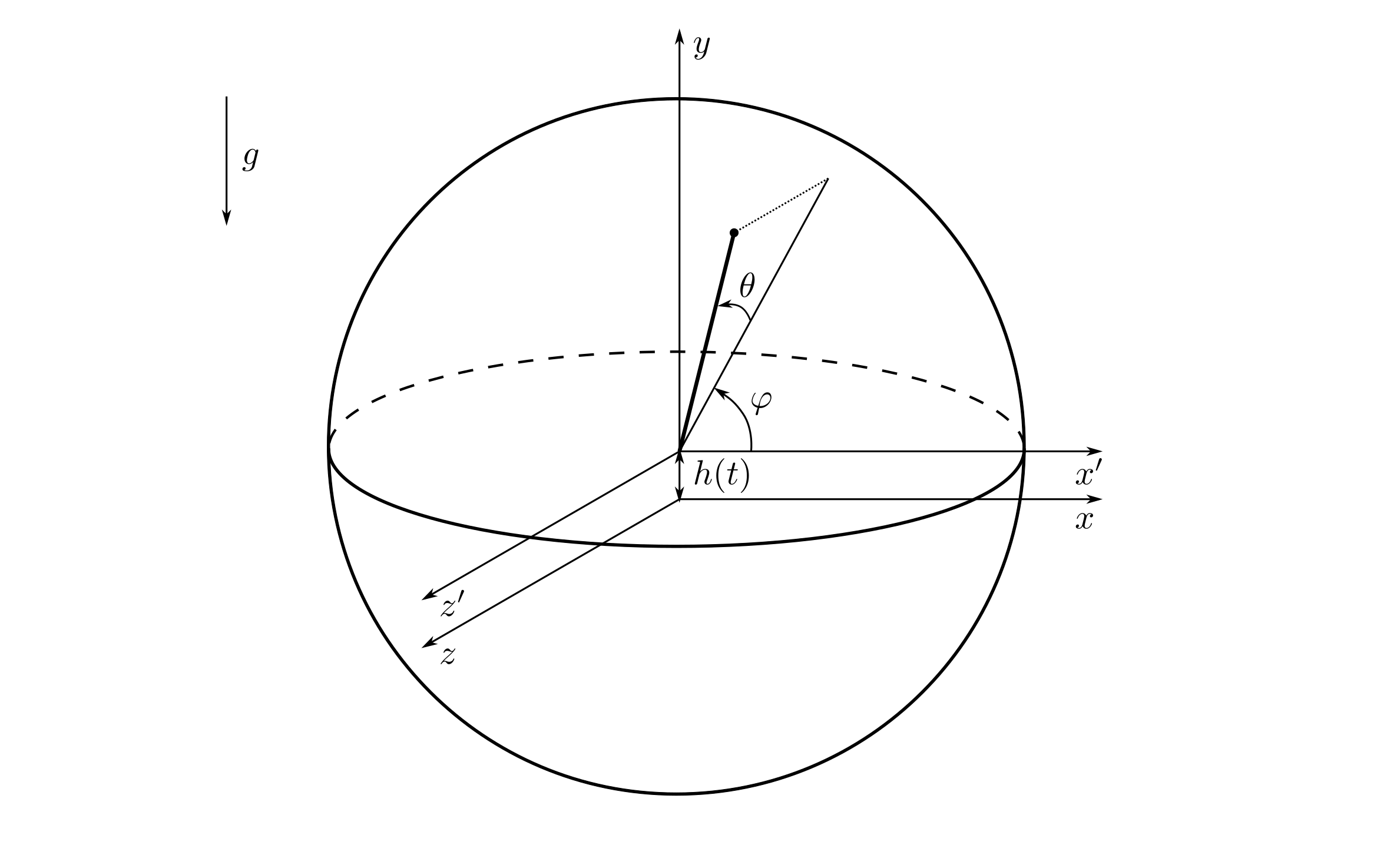}
  \caption{An inverted spherical pendulum with a vibrating pivot point.}
  \label{fig:sfig2}
\end{figure}

The kinetic and potential energies of the system can be expressed as usual
\begin{align}
    \begin{split}
        &T = \frac{1}{2}\left( \dot\theta^2 + \dot\varphi^2 \cos^2\theta - 2 \dot h \dot \theta \sin\theta \sin\varphi + 2 \dot h \dot \varphi \cos\theta \cos \varphi \right),\\
        &\Pi = \cos\theta\sin\varphi.
    \end{split}
\end{align}

We will consider the standard model for the force of viscous friction in which the resistance force $F$ is proportional to the relative velocity $v$ of the mass point
\begin{align}
    \begin{split}
        F = -\mu v = -\mu ( (-\dot\theta \sin\theta\cos\varphi - \dot\varphi\cos\theta\sin\varphi )e_x + (-\dot\theta\sin\theta\sin\varphi + \dot\varphi \cos\theta\cos\varphi)e_y + \dot\theta\cos\theta\, e_z ).
    \end{split}
\end{align}
Let $p_\theta$ and $p_\varphi$ be the conjugate momenta
\begin{align}
\label{eq4}
    p_\theta = \frac{\partial T}{\partial \dot \theta}, \quad p_\varphi = \frac{\partial T}{\partial \dot \varphi}.
\end{align}

The equations of motion can be considered as the system of four ordinary differential equations
\begin{align}
    \begin{split}
    \label{eq5}
        &\dot \theta = p_\theta + \dot h \sin\theta \sin\varphi,\\
        &\dot \varphi = (p_\varphi - \dot h \cos\theta\cos\varphi)/\cos^2\theta,\\
        &\dot p_\theta = - \dot\varphi^2 \cos\theta\sin\theta - \dot h \dot\theta \cos\theta\sin\varphi - \dot h\dot\varphi \sin\theta\cos\varphi - \mu\dot\theta + \sin\theta\sin\varphi - P_x \sin\theta\cos\varphi + P_z \cos\theta,\\
        &\dot p_\varphi = -\dot h \dot\theta \sin\theta\cos\varphi - \dot h\dot\varphi \cos\theta\sin\varphi - \cos\theta\cos\varphi - \mu \dot\varphi \cos^2\theta - P_x \cos\theta \sin\varphi.
    \end{split}
\end{align}

This system is obtained from the Lagrange equations after the substitution \eqref{eq4}. Here and below we assume that $P_x$ and $P_z$ are $T$-periodic $C^\infty$-functions.  Equations \eqref{eq5} are correctly defined for all values $\varphi$ and $\theta$ where $\cos\theta \ne 0$.

\begin{figure}[h!]
  \centering
  \includegraphics[width=0.66\linewidth]{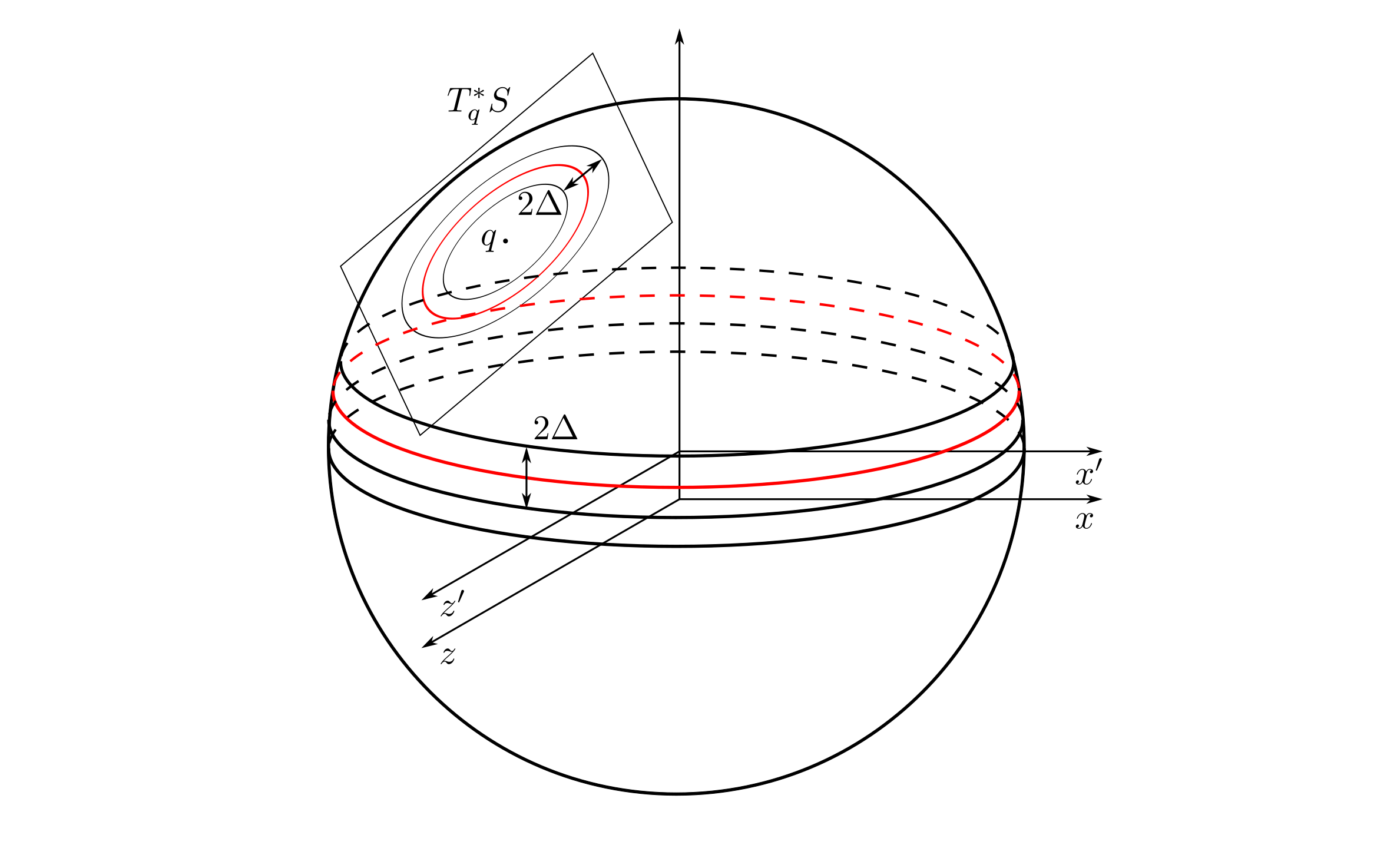}
  \caption{Regions where the original system is modified.}
  \label{fig:sfig2}
\end{figure}

Below we will also consider the following \textit{modified} system (Fig.~2)
\begin{align}
    \begin{split}
    \label{eq6}
        &\dot \theta = p_\theta +  \chi_{c,\delta}(\theta,\varphi,p_\theta,p_\varphi)\dot h \sin\theta \sin\varphi,\\
        &\dot \varphi = (p_\varphi - \chi_{c,\delta}(\theta,\varphi,p_\theta,p_\varphi)\dot h \cos\theta\cos\varphi)/\cos^2\theta,\\
        &\dot p_\theta = - (p_\varphi^2 - 2 \chi_{c,\delta}(\theta,\varphi,p_\theta,p_\varphi) p_\varphi \dot h \cos\varphi\cos\theta + \dot h^2 \cos^2\varphi\cos^2\theta) \sin\theta/\cos^3\theta\\
        &- \mu\dot\theta + \sin\theta\sin\varphi- P_x \sin\theta\cos\varphi + P_z \cos\theta - \dot h^2 \sin\theta\cos\theta\sin^2\varphi + \dot h^2 \sin\theta\cos^2\varphi/\cos\theta\\
        &- \chi_{c,\delta}(\theta,\varphi,p_\theta,p_\varphi)\dot h p_\theta \cos\theta\sin\varphi - \chi_{c,\delta}(\theta,\varphi,p_\theta,p_\varphi)\dot h p_\varphi \sin\theta\cos\varphi/\cos^2\theta\\
        &\dot p_\varphi = - \cos\theta\cos\varphi - \mu \dot\varphi\cos^2\theta - P_x \cos\theta \sin\varphi-\dot h^2 \sin^2\theta\cos\varphi\sin\varphi + \dot h^2 \cos\varphi\sin\varphi\\
        &-\chi_{c,\delta}(\theta,\varphi,p_\theta,p_\varphi)\dot h p_\theta \sin\theta\cos\varphi - \chi_{c,\delta}(\theta,\varphi,p_\theta,p_\varphi)\dot hp_\varphi \sin\varphi/\cos\theta.
    \end{split}
\end{align}
Here $\chi_{c,\delta}(\theta,\varphi,p_\theta,p_\varphi) = \sigma_\delta(\theta,\varphi) \cdot \rho_c(\theta,p_\theta,p_\varphi)$ and functions $\sigma_\delta(\theta,\varphi)$ and $\rho_c(\theta,p_\theta,p_\varphi)$ are defined as follows. Let $\Delta > 0$ be a small real number. Then we choose $\sigma_\delta(\theta,\varphi)$ to be a smooth function such that $\sigma_\delta = 0$ in the $\Delta$-neighborhood of the set $\cos\theta\sin\varphi = \delta$ and $\sigma_\delta = 1$ for all $\theta$ and $\varphi$ that do not lie in the $2\Delta$-neighborhood of this set. Similarly, the smooth function $\rho_c(\theta,p_\theta,p_\varphi)$ is defined as follows. For a given $\theta$, $\rho_c(\theta,p_\theta,p_\varphi) = 0$ for all $p_\theta$, $p_\varphi$ from the $\Delta$-neighborhood of the set $(p_\theta^2 + p_\varphi^2/\cos^2\theta)/2 = c^2$ and $\rho_c(\theta,p_\theta,p_\varphi) = 1$ for all $p_\theta$, $p_\varphi$ outside the $2\Delta$-neighborhood. These functions can be constructed explicitly but we omit this for brevity. If $\sigma_\delta \equiv 1$ and $\rho_c \equiv 1$, then systems \eqref{eq5} and \eqref{eq6} coincide.

\begin{remark}
Below we will assume that there are two one-parameter families of functions $\sigma_\delta(\theta,\varphi)$ and $\rho_c(\theta,p_\theta,p_\varphi)$, i.e. for any $\Delta > 0$ we have two smooth functions $\sigma_\delta(\theta,\varphi)$, $\rho_c(\theta,p_\theta,p_\varphi)$ satisfying the above properties. The exact form of these families needs not to be specified for the further considerations.
\end{remark}
All horizontal positions of the rod correspond to the following values of $\theta$ and $\varphi$
$$
\cos\theta\sin\varphi = 0.
$$


Therefore, when the rod is slightly above the horizon, we have
\begin{equation}
\label{eq7}
    \cos\theta\sin\varphi = \delta > 0,
\end{equation}
where $\delta$ is a relatively small real number. We will prove that any solution of the modified system that is tangent to the region defined by inequality \eqref{eq7} locally leaves this region, provided that $\delta$ is small. To be more precise, Lemma~1 holds (see Section 5). Below we will also consider the averaged version of system \eqref{eq6}:
\begin{align}
    \begin{split}
    \label{eq8}
        &\dot \theta = p_\theta,\\
        &\dot \varphi = p_\varphi/\cos^2\theta,\\
        &\dot p_\theta = - (p_\varphi^2 + \frac{1}{2} a^2 \left(\frac{2\pi}{T}\right)^2 \cos^2\varphi\cos^2\theta) \sin\theta/\cos^3\theta - \mu p_\theta + \sin\theta\sin\varphi- P_x(s) \sin\theta\cos\varphi +\\
        &P_z(s) \cos\theta - \frac{1}{2} a^2 \left(\frac{2\pi}{T}\right)^2 \sin\theta\cos\theta\sin^2\varphi + \frac{1}{2} a^2 \left(\frac{2\pi}{T}\right)^2 \sin\theta\cos^2\varphi/\cos\theta,\\
        &\dot p_\varphi = - \cos\theta\cos\varphi - \mu p_\varphi - P_x(s) \cos\theta \sin\varphi- \frac{1}{2} a^2 \left(\frac{2\pi}{T}\right)^2 \sin^2\theta\cos\varphi\sin\varphi + \frac{1}{2} a^2 \left(\frac{2\pi}{T}\right)^2 \cos\varphi\sin\varphi,\\
        &\dot s = 1.
    \end{split}
\end{align}

Here $s$ is an artificial time-like parameter which is introduced in order to distinguish two types of time-dependent functions. Functions $P_x$ and $P_z$ are not rapidly oscillating and we do not consider their averages. Function $\dot h$ can be averaged and we have
$$
\frac{1}{T} \int_0^{T} a \left(\frac{2\pi}{T}\right) \cos \left(\frac{2\pi}{T} t\right) \, dt = 0, \quad \frac{1}{T}\int_0^{T} a^2 \left(\frac{2\pi}{T}\right)^2 \cos^2 \left(\frac{2\pi}{T} t\right) \, dt = \frac{1}{2} a^2 \left(\frac{2\pi}{T}\right)^2.
$$
In other words, in the averaged system we put $\dot h$ to be zero and replace $\dot h^2$ with $\frac{1}{2} a^2 \left(\frac{2\pi}{T}\right)^2$. Lemmas 1 and 2 still hold for system \eqref{eq8} (see Lemmas 3 and 4).

\section{Main Result}

The general scheme of the proof can be described as follows. We will show that there exist $\delta > 0$ and $c > 0$ such that for any values $\varepsilon = 1/k$, $k \in \mathbb{N}$, $\Delta > 0$ there is a $T$-periodic of the modified system \eqref{eq6} and this solution never leaves set $M_{c,\delta}$, which is defined by the following two inequalities
\begin{equation}
\label{eq9}
\cos\theta\sin\varphi \geqslant \delta, \quad \frac{1}{2} \left( p_\theta^2 + \frac{p_\varphi^2}{\cos^2\theta} \right) \leqslant c^2.
\end{equation}
Then we will prove that this periodic solution cannot belong to the $2\Delta$-neighborhood of the boundary of $M_{c,\delta}$, provided that the absolute values of $\Delta$ and $\varepsilon$ are small. This follows from the results of the standard method of averaging: solutions of the modified \eqref{eq6} and the averaged \eqref{eq8} systems are close on a finite time interval when the frequency of oscillations of the pivot point is high (equivalently, $\varepsilon$ is small). Moreover, it can be shown that the solutions of the averaged system starting in a neighborhood of $\partial M_{c,\delta}$ leaves $M_{c,\delta}$ in  forward or reversed time. Hence, if the obtained periodic solution is close to the boundary, then we can conclude that this solution also leaves $M_{c,\delta}$. This contradiction proves that the periodic solution never belongs to the region where the original system is modified, i.e. this solution is also a solution for \eqref{eq5}.


\begin{theorem}
Given any $\mu, a, T > 0$ and any two $T$-periodic smooth functions $P_x$, $P_z$. There exists $\varepsilon = 1/k$, $k \in \mathbb{N}$ such that there is a $T$-periodic solution $(\theta(t), \varphi(t), p_\theta(t), p_\varphi(t))$ of system \eqref{eq5} and $\cos\theta(t)\sin\varphi(t) > 0$ for all $t$.
\end{theorem}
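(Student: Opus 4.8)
The plan is to combine a topological forced-oscillation argument for the modified system with an averaging comparison to transfer the conclusion back to the original system. The author has set up exactly the right machinery: the modified system~\eqref{eq6} agrees with the original~\eqref{eq5} except inside the two collar neighborhoods where the cut-off $\chi_{c,\delta}$ is active, and the set $M_{c,\delta}$ defined by~\eqref{eq9} is a compact (in the relevant coordinates) region that the dynamics should not be able to escape through. My first step would be to invoke Lemma~1, which guarantees that any solution of the modified system tangent to the boundary face $\cos\theta\sin\varphi=\delta$ leaves the region~\eqref{eq7} locally; this makes that face of $\partial M_{c,\delta}$ a ``block exit'' in the sense of Ważewski/Srzednicki. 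I would then want a companion statement controlling the momentum face $\tfrac12(p_\theta^2+p_\varphi^2/\cos^2\theta)=c^2$, so that $M_{c,\delta}$ becomes a periodic isolating block for the nonautonomous $T$-periodic flow.

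Given that block structure, the existence of a $T$-periodic solution of the modified system~\eqref{eq6} staying in $M_{c,\delta}$ for all time follows from the topological method of Srzednicki~\cite{srzednicki1994periodic}: one computes the fixed-point index (equivalently, the Lefschetz number) of the $T$-period Poincaré map restricted to the block, using that the exit set is a proper, nicely behaved subset so that the relevant relative homology of $(M_{c,\delta},\,\text{exit set})$ is nontrivial. This is the step I would lean on the cited topological result for rather than reprove, stating it as: because $M_{c,\delta}$ is an isolating block whose essential homology is that of a disk (nontrivial index), the Poincaré map has a fixed point, which gives the desired $T$-periodic orbit never leaving $M_{c,\delta}$, hence with $\cos\theta\sin\varphi\geqslant\delta>0$ and bounded momenta throughout.

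Next I would run the averaging argument to show this periodic orbit does not live in the $2\Delta$-collar where the cut-off is nontrivial, so that it is genuinely a solution of~\eqref{eq5}. Writing $\varepsilon=1/k$ and letting $k\to\infty$, standard averaging (the pivot term $\dot h$ averages to zero and $\dot h^2$ averages to $\tfrac12 a^2(2\pi/T)^2$) makes solutions of~\eqref{eq6} uniformly $C^0$-close on $[0,T]$ to solutions of the averaged system~\eqref{eq8}; by Lemmas~3 and~4 the averaged flow pushes any point near $\partial M_{c,\delta}$ out of $M_{c,\delta}$ in forward or reverse time. If the periodic orbit were trapped in the $2\Delta$-collar for all small $\varepsilon$, this transversality would force it to exit $M_{c,\delta}$, contradicting the previous step. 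Hence for some $\varepsilon=1/k$ the periodic orbit avoids the modified region entirely, so $\chi_{c,\delta}\equiv1$ along it and it solves~\eqref{eq5} with $\cos\theta(t)\sin\varphi(t)>0$ for all $t$, as claimed.

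The main obstacle I expect is the careful bookkeeping in the second paragraph's comparison: I must ensure the block/index computation and the averaging estimate use a single consistent choice of $\delta$ and $c$, and that the $2\Delta$-collars (where $\chi_{c,\delta}$ interpolates between $0$ and $1$) are thin enough that the averaging closeness on $[0,T]$ genuinely confines any near-boundary orbit to a region from which the averaged dynamics expels it. Quantifying ``$\Delta$ and $\varepsilon$ small'' so that the contradiction is airtight—in particular uniform constants independent of the interpolation profile of $\chi_{c,\delta}$, as flagged in the Remark—is the delicate part; the topological existence step, by contrast, is essentially a direct application of~\cite{srzednicki1994periodic} once Lemma~1 supplies the exit behavior.
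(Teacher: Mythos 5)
Your proposal is correct and follows essentially the same route as the paper: Lemma~1 plus its momentum-face companion (the paper's Lemma~2) make $M_{c,\delta}$ a block for the modified system~\eqref{eq6}, Srzednicki's theorem (the paper phrases the index condition as $\chi(W_{c,\delta})-\chi(W^-_{c,\delta})=1$, with the egress set homotopic to a circle) yields the $T$-periodic solution in the block, and the averaging comparison with system~\eqref{eq8} via Lemmas~3--4 (packaged in the paper as Corollary~1, with constants $d$, $\tau$, $l$ and the requirement $2\Delta<d$) rules out the orbit touching the collar where the cut-off is active, exactly as you describe. The bookkeeping you flag as delicate is precisely what the paper handles by fixing $c$, $\delta$ from Lemmas~1--2 first and then choosing $\varepsilon_0$ so that modified and averaged solutions are $l/2$-close on intervals of length $2\tau$.
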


\begin{proof}
Let us consider system \eqref{eq6}. First, we specify parameters $c$ and $\delta$ in this modified system. Let $c$ be the constant from Lemma~2 and $c > 0$ be determined by Lemma~1. Let $M_{c,\delta}$ be a subset of the phase space defined by inequalities \eqref{eq9}. Let $W_{c,\delta}$ be the subset of the extended phase space such that
$$
W_{c,\delta} = M_{c,\delta} \times \mathbb{R}.
$$
We say that point $(\theta, \varphi, p_\theta, p_\varphi, t_0) \in \partial W_{c,\delta}$ is an egress point for $W_{c,\delta}$ if for some small $\epsilon > 0$ we have $(\theta(t_0 + t), \varphi(t_0 + t), p_\theta(t_0 + t), p_\varphi(t_0 + t), t_0 + t) \notin W_{c, \delta}$ for all $t \in (0, \epsilon)$; here $(\theta(t_0 + t), \varphi(t_0 + t), p_\theta(t_0 + t), p_\varphi(t_0 + t))$ is the solution of system \eqref{eq6} such that $(\theta(t_0), \varphi(t_0), p_\theta(t_0), p_\varphi(t_0)) = (\theta, \varphi, p_\theta, p_\varphi)$.

Let us denote the set of egress points by $W^{-}_{c,\delta}$. From Lemma~2 we obtain that point $(\theta, \varphi, p_\theta, p_\varphi, t_0) \notin W^{-}_{c,\delta}$ if
$$
\frac{1}{2} \left( p_\theta^2 + \frac{p_\varphi^2}{\cos^2\theta} \right) = c^2,
$$
and
$$
\cos\theta\sin\varphi > \delta.
$$
Therefore, for all points of $W^{-}_{c,\delta}$ we have
$$
\cos\theta\sin\varphi = \delta.
$$
From Lemma~1 we obtain that points satisfying
$$
\left.\frac{d}{dt}(\cos\theta(t)\sin\varphi(t))\right|_{t=t_0}=-p_\theta \sin\theta\sin\varphi + \frac{p_\varphi}{\cos\theta}\cos\varphi = 0.
$$
are in $W^{-}_{c,\delta}$. Clearly, point $(\theta, \varphi, p_\theta, p_\varphi, t_0) \in W^{-}_{c,\delta}$ when
$$
\left.\frac{d}{dt}(\cos\theta(t)\sin\varphi(t))\right|_{t=t_0}=-p_\theta \sin\theta\sin\varphi + \frac{p_\varphi}{\cos\theta}\cos\varphi < 0,
$$
and $(\theta, \varphi, p_\theta, p_\varphi, t_0) \notin W^{-}_{c,\delta}$ if
$$
\left.\frac{d}{dt}(\cos\theta(t)\sin\varphi(t))\right|_{t=t_0}=-p_\theta \sin\theta\sin\varphi + \frac{p_\varphi}{\cos\theta}\cos\varphi > 0.
$$


Therefore, $W^{-}_{c,\delta}$ is compact and homotopic to a circle.  $W_{c, \delta}$ is also compact and homotopic to a point. In particular, $\chi(W_{c, \delta}) - \chi(W^-_{c, \delta}) = 1$ (here $\chi$ denotes the usual Euler characteristic). From the result by R. Srzednicki \cite{srzednicki1994periodic} (see also \cite{srzednicki2005fixed} or \cite{polekhin2015forced}), we obtain that for any $\varepsilon > 0$ and $\Delta > 0$ there exists a $T$-periodic solution $(\theta(t), \varphi(t), p_\theta(t), p_\varphi(t))$ of system \eqref{eq6} such that $(\theta(t), \varphi(t), p_\theta(t), p_\varphi(t), t) \in W_{c,\delta}$ for all $t$.

Let $d$ be the constant from Corollary 1 and $2\Delta < d$. Then there exists a $T$-periodic solution for any $\varepsilon > 0$. We will show that for small $\varepsilon > 0$ this solution cannot belong to the $2\Delta$-neighborhood of $M_{c, \delta}$. In other words, the trajectory of this solutions does not run through the set where we modify the original system.

Let $\varepsilon_0 > 0$ be a number such that for any $\varepsilon \in (0, \varepsilon_0)$ and for any initial condition from the $2\Delta$-neighborhood of $\partial M_{c,\delta}$, the corresponding solutions of the modified and the averaged systems are $l/2$-close for $t \in [t_0 - \tau, t_0 + \tau]$. Here $l$ and $\tau$ are the constants from Corollary 1. The existence of such $\varepsilon_0$ follows from  a classical theorem on averaging on a finite time interval for compacts \cite{sanders2007averaging}.

Suppose that for some $t_0$, the periodic solution of the modified system belongs to the $2\Delta$-neighborhood of $\partial M_{c,\delta}$. From the corollary we obtain that $(\theta(t), \varphi(t), p_\theta(t), p_\varphi(t), t) \notin W_{c, \delta}$ for some $t \in [t_0 - \tau, t_0 + \tau]$ (Fig.~3).

\begin{figure}[h!]
  \centering
  \includegraphics[width=0.66\linewidth]{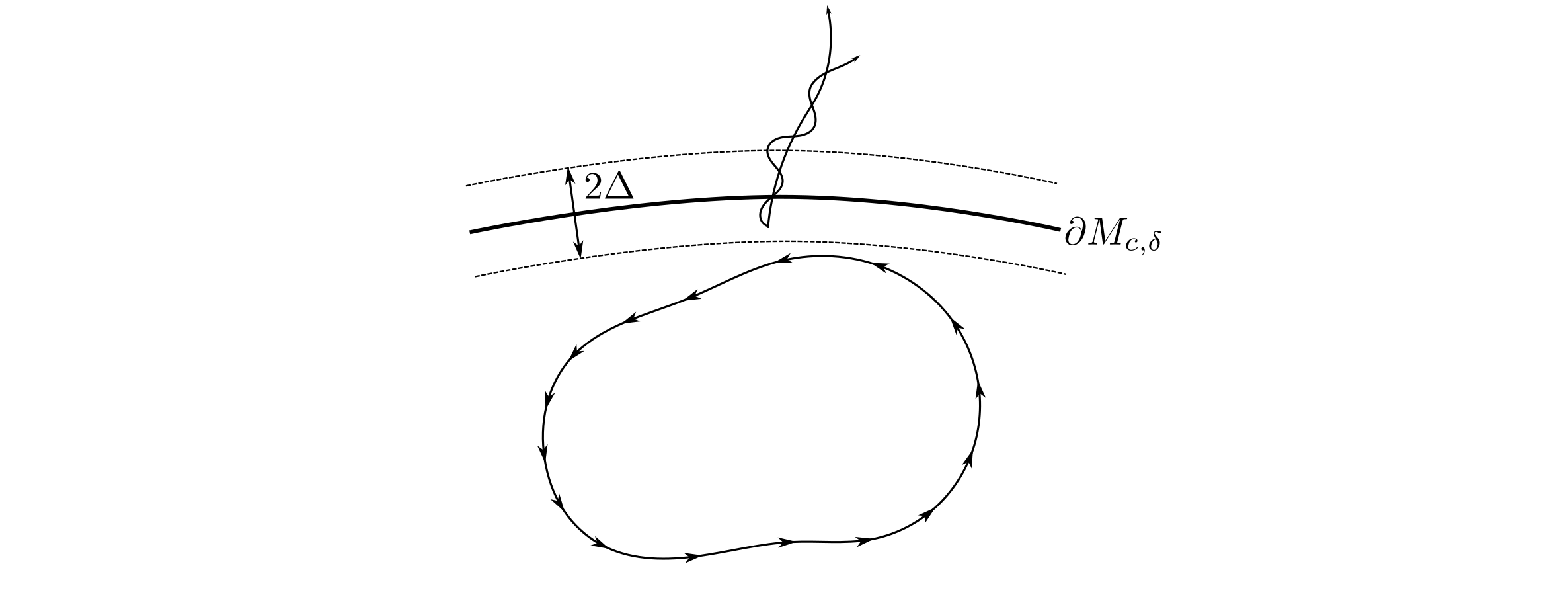}
  \caption{The periodic solution cannot be close to the boundary.}
  \label{fig:sfig2}
\end{figure}

It contradicts the property of the considered periodic solution to remain in $W_{c, \delta}$ for all $t$. Finally, we conclude that the periodic solution obtained for the modified system remains a solution for the original system.
\end{proof}

Below we present some numerical results concerning periodic solutions of system \eqref{eq5}. Our main goal is to show that periodic solutions without falling, the existence of which is guaranteed by Theorem 1, can be asymptotically stable. One can see that the obtained periodic trajectories may have relatively complex forms.
\begin{figure}[h!]
\begin{subfigure}{.33\textwidth}
  \centering
  \includegraphics[width=1.0\linewidth]{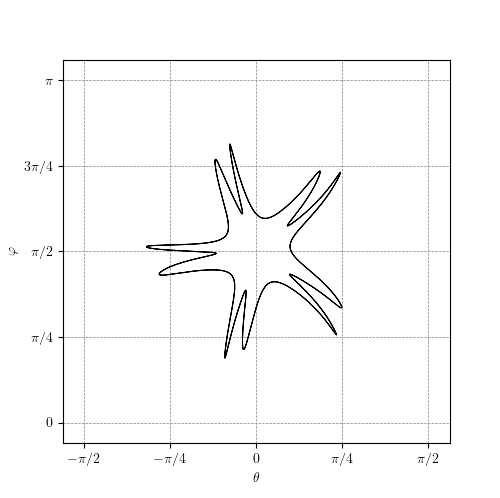}
  \caption{$k = 10$, $a = 5$, $A = 6$.}
  \label{fig:sfig2}
\end{subfigure}%
\begin{subfigure}{.33\textwidth}
  \centering
  \includegraphics[width=1.0\linewidth]{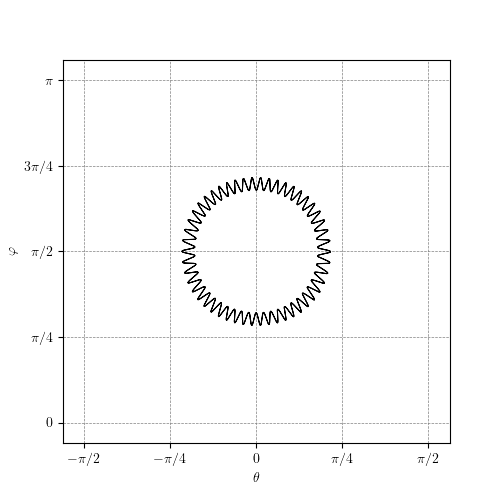}
  \caption{$k = 50$, $a = 5$, $A = 6$.}
  \label{fig:sfig1}
\end{subfigure}
\begin{subfigure}{.33\textwidth}
  \centering
  \includegraphics[width=1.0\linewidth]{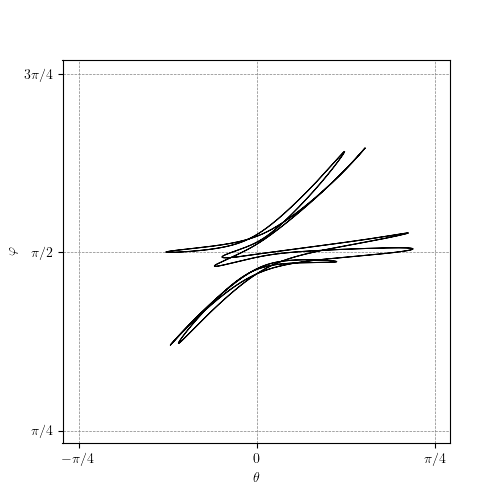}
  \caption{$k = 10$, $a = 5$, $A = 3/2$, $\alpha= \pi/2$.}
  \label{fig:sfig2}
\end{subfigure}\\
\begin{subfigure}{.33\textwidth}
  \centering
  \includegraphics[width=1.0\linewidth]{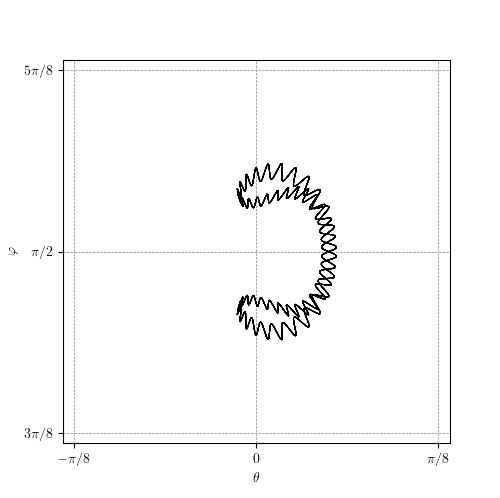}
  \caption{$k = 50$, $a = 5$, $A = 3/2$, $\alpha= \pi/2$.}
  \label{fig:sfig1}
\end{subfigure}%
\begin{subfigure}{.33\textwidth}
  \centering
  \includegraphics[width=1.0\linewidth]{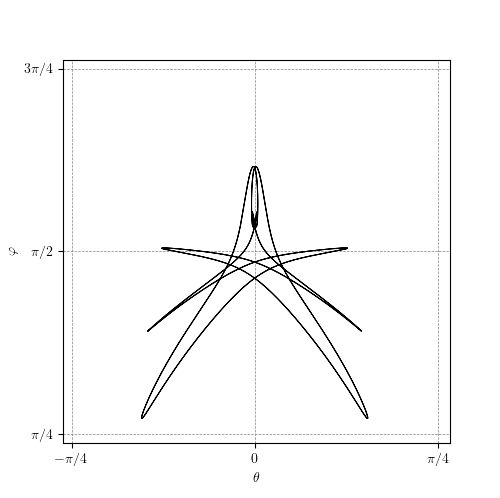}
  \caption{$k = 10$, $a = 5$, $A = 3/2$, $\alpha= \pi$.}
  \label{fig:sfig2}
\end{subfigure}%
\begin{subfigure}{.33\textwidth}
  \centering
  \includegraphics[width=1.0\linewidth]{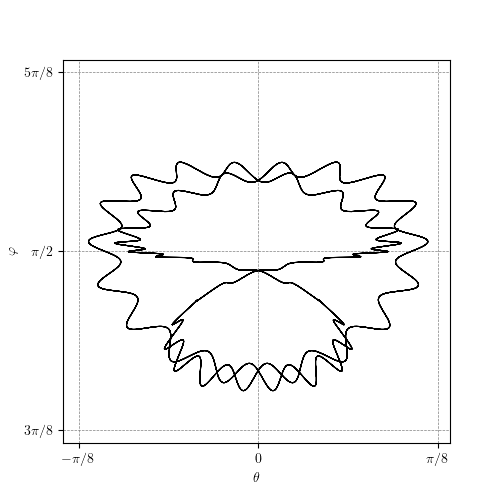}
  \caption{$k = 50$, $a = 5$, $A = 3/2$, $\alpha= \pi$.}
  \label{fig:sfig1}
\end{subfigure}\\
\label{figEarth}
\caption{Asymptotically stable $2\pi$-periodic solutions for $P_x(t) = A \cos t$, $P_z(t) = A \sin t$ and for $P^\alpha_x(t) = A \cos (\alpha - \alpha\cos t)$, $P^\alpha_z(t) = A \sin(\alpha - \alpha\cos t)$.}
\end{figure}

System \eqref{eq5} is determined by the following list of parameters: $a$, $\varepsilon$, $T$, $\mu$. We will consider the cases when $T = 2\pi$, $\mu = 1$, and parameters $a$ and $\varepsilon$ vary. We will also consider two types of functions $P_x(t)$, $P_z(t)$, namely:
\begin{enumerate}
    \item $P_x(t) = A \cos t$, $P_z(t) = A \sin t$,
    \item $P^\alpha_x(t) = A \cos (\alpha - \alpha\cos t)$, $P^\alpha_z(t) = A \sin(\alpha - \alpha\cos t)$.
\end{enumerate}

Here $\alpha, A \in \mathbb{R}$. To a large degree this choice of functions $P_x(t)$, $P_z(t)$ is voluntary and many other types of the external fields can be considered. In our case, the first type defines a rotating force field of magnitude $A$, and the second type is a vector field oscillating between the angles zero and $2\alpha$; the magnitude in this case also equals $A$.


\section{Technical Lemmas}

\begin{lemma}
Given any positive $c > 0$. For system \eqref{eq6} there exists $\delta > 0$ such that for any $\Delta > 0$ and for any initial conditions $\theta = \theta(t_0)$, $\varphi = \varphi(t_0)$, $p_{\theta} = p_{\theta}(t_0)$, and $p_{\varphi} = p_{\varphi}(t_0)$ satisfying $\left.\cos\theta(t)\sin\varphi(t)\right|_{t=t_0} = \delta$, $\left.\frac{d}{dt}(\cos\theta(t)\sin\varphi(t))\right|_{t=t_0} = 0$, and
$\left.\frac{1}{2} \left( p_\theta^2(t) + \frac{p_\varphi^2(t)}{\cos^2\theta(t)} \right)\right|_{t=t_0}\leqslant c^2$, and for any $t_0$ we have $\left.\frac{d^2}{dt^2}(\cos\theta(t)\sin\varphi(t))\right|_{t=t_0} < 0 $.
\end{lemma}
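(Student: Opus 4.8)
The plan is to compute the second derivative of $G(t) := \cos\theta(t)\sin\varphi(t)$ directly along a solution of \eqref{eq6} and to show that, once the tangency condition $\dot G = 0$ is imposed, every contribution except the gravitational one either cancels, is manifestly nonpositive, or carries a factor of $\delta$. First I would exploit the cutoff: on the surface $\{\cos\theta\sin\varphi = \delta\}$ the factor $\sigma_\delta$ vanishes identically on a full neighborhood, so $\chi_{c,\delta}\equiv 0$ there together with all its time derivatives along the trajectory. Hence on this surface the modified system reduces to $\dot\theta = p_\theta$, $\dot\varphi = p_\varphi/\cos^2\theta$, giving $\ddot\theta = \dot p_\theta$ and $\ddot\varphi = \dot p_\varphi/\cos^2\theta + 2 p_\theta p_\varphi\sin\theta/\cos^3\theta$, with $\dot p_\theta,\dot p_\varphi$ taken from the $\chi=0$ specialization of \eqref{eq6}. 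Crucially, this reduction is valid for every $\Delta>0$, since $\chi=0$ on the surface regardless of $\Delta$.

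Differentiating $G$ twice and substituting, I expect the following structure. From $\dot G = -p_\theta\sin\theta\sin\varphi + (p_\varphi/\cos\theta)\cos\varphi$ the tangency condition reads $p_\theta\sin\theta\sin\varphi = (p_\varphi/\cos\theta)\cos\varphi$, which makes the two friction contributions (those proportional to $\mu$) cancel. The mixed kinematic term $-2\dot\theta\dot\varphi\sin\theta\cos\varphi$ cancels against the term generated by differentiating $\cos^{-2}\theta$ in $\ddot\varphi$, and several $\dot h^2$-terms cancel in pairs. After using $\cos\theta\sin\varphi=\delta$ repeatedly (so that $\cos^2\theta\sin^2\varphi=\delta^2$, $\cos^2\theta\cos\varphi\sin\varphi=\delta\cos\theta\cos\varphi$, and so on) and the identity $\sin^2\theta=1-\cos^2\theta$ to reduce the gravitational part $-\sin^2\theta\sin^2\varphi-\cos^2\varphi$ to $-1+\delta^2$, the expression collapses to
\begin{equation*}
\ddot G = (-1 + \delta^2) - p_\theta^2\cos\theta\sin\varphi - \frac{p_\varphi^2\sin\varphi}{\cos\theta} - \delta\,(P_x\cos\theta\cos\varphi + P_z\sin\theta) + \dot h^2\,\delta\,(\sin^2\theta\sin^2\varphi + \cos^2\varphi).
\end{equation*}

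It then remains to estimate. On the region in question $\cos\theta\geq\delta>0$ and $\sin\varphi\geq\delta>0$ (each factor of $\cos\theta\sin\varphi=\delta$ is at most $1$), so the two kinetic terms are nonpositive and may be discarded from an upper bound. Since $P_x,P_z$ are continuous and $T$-periodic they are bounded, and $\dot h = a(2\pi/T)\cos(2\pi t/(T\varepsilon))$ obeys $\dot h^2\leq a^2(2\pi/T)^2$ uniformly in $\varepsilon$; hence the force and vibration terms are each bounded in absolute value by $\delta$ times a constant depending only on $a,T,\|P_x\|_\infty,\|P_z\|_\infty$. Therefore
\begin{equation*}
\ddot G \leq -1 + \delta^2 + \delta\big(\|P_x\|_\infty + \|P_z\|_\infty + a^2(2\pi/T)^2\big),
\end{equation*}
which is strictly negative once $\delta$ is small enough, the choice being independent of $\Delta$, of $\varepsilon$, of the momenta, and of $c$.

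The main obstacle is the bookkeeping in the second step: one must verify that the dangerous terms containing negative powers of $\cos\theta$ — which do \emph{not} become small as $\delta\to0$, since $\cos\theta$ can itself be of order $\delta$ — either cancel exactly or recombine precisely into the sign-definite kinetic terms $-p_\theta^2\cos\theta\sin\varphi$ and $-p_\varphi^2\sin\varphi/\cos\theta$, while every surviving force or vibration contribution retains a factor $\cos\theta\sin\varphi=\delta$. The energy bound $\tfrac12(p_\theta^2+p_\varphi^2/\cos^2\theta)\le c^2$ is available for uniformity, but the computation shows it is not actually needed here, because the momentum-dependent terms already carry the favorable sign.
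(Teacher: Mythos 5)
Your proposal is correct and follows essentially the same direct-computation route as the paper: both exploit $\sigma_\delta=0$ near the surface $\cos\theta\sin\varphi=\delta$ to reduce to the $\chi=0$ dynamics, use the tangency condition to kill the friction terms, and show that everything except the gravitational $-1$ carries a factor $\delta$ or a favorable sign (your collapsed expression agrees with the paper's up to the sign of the $P_x$ term, which is immaterial to the estimate). The only difference is cosmetic: you discard the kinetic terms $-p_\theta^2\cos\theta\sin\varphi - p_\varphi^2\sin\varphi/\cos\theta$ by sign, making your $\delta$ independent of $c$, whereas the paper bounds them by $2c^2\delta$ via the energy constraint, which is why its constant $C$ depends on $c$.
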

\begin{proof}
The proof is a direct calculation. First, from \eqref{eq5}, we obtain
\begin{equation*}
    -p_\theta \sin\theta\sin\varphi + \frac{p_\varphi}{\cos\theta}\cos\varphi = 0.
\end{equation*}
Here we use that $\sigma_\delta(\theta,\varphi) = 0$ for the considered points. In particular, $\dot \theta = p_\theta$ and $\dot\varphi = p_\varphi/\cos^2\theta$. For the second derivative we have the expression
\begin{align*}
    \left.\frac{d^2}{dt^2}(\cos\theta\sin\varphi)\right|_{t=t_0}= -\dot p_\theta \sin\theta \sin\varphi + \frac{\dot p_\varphi}{\cos\theta}\cos\varphi - p_\theta^2 \cos\theta\sin\varphi - p_\varphi^2\frac{\sin\varphi}{\cos^3\theta}.
\end{align*}
Finally, for small $\delta > 0$
\begin{align*}
    &\left.\frac{d^2}{dt^2}(\cos\theta\sin\varphi)\right|_{t=t_0} = -1 + \cos^2\theta\sin^2\varphi\\ &-(-(p_\varphi^2 + \dot h^2 \cos^2\varphi \cos^2\theta) \sin\theta / \cos^3\theta- \mu\dot\theta + \sin\theta\sin\varphi - P_x \sin\theta\cos\varphi + P_z \cos\theta\\
    &- \dot h^2 \sin\theta\cos\theta\sin^2\varphi + \dot h^2 \sin\theta\cos^2\varphi/\cos\theta ) \sin\theta \sin\varphi - p_\theta^2 \cos\theta\sin\varphi - p_\varphi^2\frac{\sin\varphi}{\cos^3\theta}  \\
    &+ (- \cos\theta\cos\varphi - \mu \dot\varphi\cos^2\theta - P_x \cos\theta \sin\varphi-\dot h^2 \sin^2\theta\cos\varphi\sin\varphi + \dot h^2 \cos\varphi\sin\varphi)\frac{\cos\varphi}{\cos\theta}\\
    & = P_x \cos^2\theta\cos\varphi\sin\varphi - P_z \cos\theta\sin\theta\sin\varphi + \mu(p_\theta \sin\theta\sin \varphi - p_\varphi \frac{\cos\varphi}{\cos\theta})\\
    &+ \dot h^2 (\cos^2\varphi\sin\varphi/\cos\theta   + \sin^2\theta\cos\theta\sin^3\varphi - \sin^2\theta\cos^2\varphi\sin\varphi/\cos\theta)\\
    &+ p_\varphi^2(\sin^2\theta \sin\varphi/ \cos^3\theta - \sin\varphi/ \cos^3\theta) - p_\theta^2 \cos\theta\sin\varphi = -1 + C\delta < 0.
\end{align*}
Here the constant $C$ depends on $P_x$, $P_z$, $c$, $\dot h^2$. The latter inequality holds for any $0 < \delta < 1/C$.
\end{proof}

\begin{lemma}
There exists $c$ such that for system \eqref{eq6} for any $\Delta > 0$ and for any initial conditions $\theta = \theta(t_0)$, $\varphi = \varphi(t_0)$, $p_{\theta} = p_{\theta}(t_0)$, and $p_{\varphi} = p_{\varphi}(t_0)$ satisfying $$\frac{1}{2} \left( p_\theta^2 + \frac{p_\varphi^2}{\cos^2\theta} \right) = c^2 \mbox{ and } \cos\theta \ne 0$$ we have $$\frac{d}{dt}\left.\frac{1}{2} \left( p_\theta^2(t) + \frac{p_\varphi^2(t)}{\cos^2\theta(t)} \right)\right|_{t=t_0} < 0.$$
\end{lemma}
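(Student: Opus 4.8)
The plan is to show that the kinetic-energy-like function $E = \tfrac12\bigl(p_\theta^2 + p_\varphi^2/\cos^2\theta\bigr)$ is strictly decreasing on its own level set $\{E = c^2\}$ once $c$ is large, the mechanism being that the dissipation $-2\mu E$ eventually dominates every other contribution to $\dot E$. I would open with the decisive observation that on the set $\tfrac12(p_\theta^2 + p_\varphi^2/\cos^2\theta) = c^2$ the cut-off satisfies $\rho_c = 0$ (the set lies inside its own $\Delta$-neighborhood), hence $\chi_{c,\delta} = \sigma_\delta\rho_c = 0$ there. This is the exact analogue of the step ``$\sigma_\delta = 0$ for the considered points'' used in Lemma~1. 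Consequently, when evaluating $\dot E$ at $t=t_0$ one substitutes into \eqref{eq6} the vector field with every $\chi_{c,\delta}$-term deleted; in particular $\dot\theta = p_\theta$, $\dot\varphi = p_\varphi/\cos^2\theta$, and friction enters simply as $-\mu p_\theta$ in $\dot p_\theta$ and $-\mu p_\varphi$ in $\dot p_\varphi$. This step is what removes the dangerous contributions of the form $\chi_{c,\delta}\dot h\,\cdot(\text{quadratic in momenta})$, which would otherwise produce a term of order $E$ that could swamp the dissipation when $a/T$ is large.

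Next I would differentiate, writing $u = p_\varphi/\cos\theta$ and using $\dot E = p_\theta\dot p_\theta + (p_\varphi/\cos^2\theta)\dot p_\varphi + p_\varphi^2\sin\theta\,\dot\theta/\cos^3\theta$. Two cancellations carry the computation. First, the inertial term $-p_\theta p_\varphi^2\sin\theta/\cos^3\theta$ arising in $p_\theta\dot p_\theta$ is cancelled exactly by the term $p_\varphi^2\sin\theta\,\dot\theta/\cos^3\theta$ coming from $\tfrac{d}{dt}(1/\cos^2\theta)$, since $\dot\theta = p_\theta$ here. Second, the only remaining genuinely singular contributions, $\mp\,\dot h^2 p_\theta\cos^2\varphi\sin\theta/\cos\theta$, cancel against one another, while the surviving $\dot h^2$-terms in $\dot p_\varphi$ combine through $1-\sin^2\theta=\cos^2\theta$ into a bounded expression. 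After these cancellations one obtains
\begin{align*}
\dot E = -2\mu E + L,
\end{align*}
where
\begin{align*}
L &= p_\theta\bigl(\sin\theta\sin\varphi - P_x\sin\theta\cos\varphi + P_z\cos\theta - \dot h^2\sin\theta\cos\theta\sin^2\varphi\bigr)\\
&\quad + u\bigl(-\cos\varphi - P_x\sin\varphi + \dot h^2\cos\theta\cos\varphi\sin\varphi\bigr)
\end{align*}
is \emph{linear} in $(p_\theta,u)$ with coefficients bounded uniformly in $(\theta,\varphi,t)$, since $P_x,P_z$ are continuous and $T$-periodic and $|\dot h|\le 2\pi a/T$.

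Finally I would close the estimate. On the level set $p_\theta^2 + u^2 = 2c^2$, so $|p_\theta|+|u|\le 2c$, and hence $|L|\le 2Bc$, where $B$ bounds the two coefficient groups above. Therefore
\begin{align*}
\dot E \le -2\mu c^2 + 2Bc = 2c\,(B-\mu c),
\end{align*}
which is strictly negative as soon as $c > B/\mu$; fixing any such $c$ proves the lemma. The estimate is uniform in $\Delta$, as required, because it uses only the value $\chi_{c,\delta}=0$ on the level set and not the particular profile of the cut-off, so the same $c$ works for every $\Delta>0$.

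I expect the main obstacle to be organizational rather than conceptual: carefully tracking the many $\dot h$- and $\cos\theta$-dependent terms and verifying that every negative power of $\cos\theta$ truly cancels, so that $L$ is genuinely bounded and no hidden term of order $E$ survives beyond $-2\mu E$. The conceptual content is a single line — the observation $\chi_{c,\delta}=0$ on $\{E=c^2\}$, which cleans up the dissipation term — but confirming that the entire remainder is only linear in the momenta is where the bookkeeping must be done with care.
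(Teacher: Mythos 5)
Your proposal is correct and matches the paper's own proof essentially line for line: the same key observation that $\rho_c=0$ (hence $\chi_{c,\delta}=0$) on the level set, the same two cancellations (the $p_\varphi^2\sin\theta/\cos^3\theta$ terms and the singular $\dot h^2\cos^2\varphi\sin\theta/\cos\theta$ terms, with $1-\sin^2\theta=\cos^2\theta$ taming the rest), leading to the identical expression $\dot E = -2\mu c^2 + L$ with $L$ linear in $(p_\theta, p_\varphi/\cos\theta)$ with bounded coefficients, and the same conclusion for $c$ large. Your write-up is in fact slightly more explicit than the paper's about the final bound and the uniformity in $\Delta$, but the argument is the same.
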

\begin{proof}
Let us calculate the derivative
\begin{align*}
    \frac{d}{dt}\left.\frac{1}{2} \left( p_\theta^2(t) + \frac{p_\varphi^2(t)}{\cos^2\theta(t)} \right)\right|_{t=t_0} = p_\theta \dot p_\theta + \frac{p_\varphi \dot p_\varphi}{\cos^2\theta} + p_\varphi^2 \frac{\sin\theta \dot\theta}{\cos^3\theta}.
\end{align*}
Since $\rho_c(\theta,p_\theta,p_\varphi) = 0$ in all considered points, we can rewrite this expression as follows
\begin{align*}
    &p_\theta \dot p_\theta + \frac{p_\varphi \dot p_\varphi}{\cos^2\theta} + p_\varphi \frac{\sin\theta p_\theta}{\cos^3\theta} = \\
    &p_\theta (- (p_\varphi^2 + \dot h^2 \cos^2\varphi\cos^2\theta) \sin\theta/\cos^3\theta - \mu p_\theta + \sin\theta\sin\varphi- P_x \sin\theta\cos\varphi +\\
    &P_z \cos\theta - \dot h^2 \sin\theta\cos\theta\sin^2\varphi + \dot h^2 \sin\theta\cos^2\varphi/\cos\theta) + p_\varphi^2 p_\theta \sin\theta  /\cos^3\theta +\\
    & p_\varphi (- \cos\theta\cos\varphi - \mu p_\varphi - P_x \cos\theta \sin\varphi-\dot h^2 \sin^2\theta\cos\varphi\sin\varphi + \dot h^2 \cos\varphi\sin\varphi)/\cos^2\theta=\\
    & -2\mu c^2 + p_\theta (\sin\theta\sin\varphi - P_x \sin\theta\cos\varphi + P_z\cos\theta - \dot h^2 \sin\theta\cos\theta\sin^2\varphi) +\\
    &p_\varphi / \cos\theta \cdot (-\cos\varphi - P_x\sin\varphi + \dot h^2 \cos\varphi \sin\varphi \cos\theta).
\end{align*}
Since $p_\theta \in [-c,c]$ and $p_\varphi/\cos\theta \in [-c,c]$, then the sign of the expression is defined by the first term ($-2\mu c^2$), provided that $c$ is large.
\end{proof}
\begin{lemma}
Given any positive $c > 0$. For system \eqref{eq8} there exists $\delta > 0$ such that for system \eqref{eq8} for any initial conditions $\theta$, $\varphi$, $p_{\theta}$, $p_{\varphi}$ and $s_0$ satisfying $\left.\cos\theta(t)\sin\varphi(t)\right|_{t=0} = \delta$, $\left.\frac{d}{dt}(\cos\theta(t)\sin\varphi(t))\right|_{t=0} = 0$, and
$\left.\frac{1}{2} \left( p_\theta^2(t) + \frac{p_\varphi^2(t)}{\cos^2\theta(t)} \right)\right|_{t=0}\leqslant c^2$, we have $\left.\frac{d^2}{dt^2}(\cos\theta(t)\sin\varphi(t))\right|_{t=0} < 0 $.
\end{lemma}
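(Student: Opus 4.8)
The plan is to repeat the direct computation of Lemma~1 verbatim, with two simplifications afforded by the passage to the averaged system~\eqref{eq8}: the terms linear in $\dot h$, which in~\eqref{eq6} carried the cutoff factor $\chi_{c,\delta}$, have averaged to zero and are simply absent, while $\dot h^2$ has been replaced throughout by the constant $\tfrac12 a^2(2\pi/T)^2$. In particular the first two equations of~\eqref{eq8} read $\dot\theta=p_\theta$ and $\dot\varphi=p_\varphi/\cos^2\theta$ unconditionally, so here there is no need to invoke $\sigma_\delta=0$.

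First I would set $f=\cos\theta\sin\varphi$ and compute $\dot f=-p_\theta\sin\theta\sin\varphi+\tfrac{p_\varphi}{\cos\theta}\cos\varphi$, so that the hypothesis $\dot f|_{t=0}=0$ becomes $p_\theta\sin\theta\sin\varphi=\tfrac{p_\varphi}{\cos\theta}\cos\varphi$. Differentiating again (using $\dot\theta=p_\theta$, $\dot\varphi=p_\varphi/\cos^2\theta$) the $p_\theta p_\varphi$ cross terms cancel and one is left with
\begin{equation*}
\ddot f=-\dot p_\theta\sin\theta\sin\varphi+\frac{\dot p_\varphi}{\cos\theta}\cos\varphi-p_\theta^2\cos\theta\sin\varphi-\frac{p_\varphi^2\sin\varphi}{\cos^3\theta},
\end{equation*}
exactly as in Lemma~1. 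I would then insert $\dot p_\theta$ and $\dot p_\varphi$ from~\eqref{eq8} (with $P_x,P_z$ evaluated at $s_0$) and regroup.

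The substance of the argument is the same bookkeeping that reduces the right-hand side to $-1+C\delta$. The two gravitational contributions, $-\sin^2\theta\sin^2\varphi$ coming from the $\sin\theta\sin\varphi$ in $\dot p_\theta$ and $-\cos^2\varphi$ coming from the $-\cos\theta\cos\varphi$ in $\dot p_\varphi$, combine through $\sin^2\theta+\cos^2\theta=1$ to $-1+\cos^2\theta\sin^2\varphi=-1+\delta^2$ on the surface $\cos\theta\sin\varphi=\delta$. The friction contributions collect into $\mu\bigl(p_\theta\sin\theta\sin\varphi-\tfrac{p_\varphi}{\cos\theta}\cos\varphi\bigr)$, which is identically zero by the constraint $\dot f=0$. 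Every surviving term then carries an explicit factor $\delta$: the $P_x,P_z$ terms because they contain $\cos\theta\sin\varphi$; the constant terms because $\tfrac{\cos^2\varphi}{\cos\theta}(1-\sin^2\theta)=\cos\theta\cos^2\varphi$ reassembles them into $\tfrac12 a^2(2\pi/T)^2\,\delta\,(\cos^2\varphi+\sin^2\theta\sin^2\varphi)$; the $p_\varphi^2$ terms because they telescope to $-p_\varphi^2\sin\varphi/\cos\theta$, bounded by $2c^2\delta$ via $p_\varphi^2/\cos^2\theta\le 2c^2$; and the last term is literally $-p_\theta^2\delta$ with $p_\theta^2\le 2c^2$. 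Hence $\ddot f|_{t=0}=-1+C\delta$ with $C=C(P_x,P_z,c,a,T)$, and the conclusion holds for every $0<\delta<1/C$.

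The only point requiring care, and the main obstacle, is that several of the individual terms, such as $\cos^2\varphi\sin\varphi/\cos\theta$ and $p_\varphi^2\sin\varphi/\cos^3\theta$, are not $O(\delta)$ term by term when $\cos\theta$ is itself small; they become $O(\delta)$ only after the indicated algebraic cancellations and the momentum bound are used. Since~\eqref{eq8} contains no cutoff factors, the computation is in fact a shade shorter than that of Lemma~1.
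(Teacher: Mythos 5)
Your proof is correct and is essentially the paper's own argument: the paper gives no separate proof of this lemma, treating it as the Lemma~1 computation with the $\chi_{c,\delta}\dot h$ terms dropped and $\dot h^2$ replaced by $\tfrac12 a^2(2\pi/T)^2$, which is exactly what you carry out. Your bookkeeping (gravity terms summing to $-1+\delta^2$, friction vanishing via $\dot f=0$, and all remaining terms being $O(\delta)$ after the cancellations and the momentum bound $p_\theta^2,\,p_\varphi^2/\cos^2\theta\leqslant 2c^2$) checks out.
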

\begin{lemma}
There exists $c$ such that for system \eqref{eq8} for any initial conditions $\theta$, $\varphi$, $p_{\theta}$, $p_{\varphi}$, and $s_0$ satisfying $$\frac{1}{2} \left( p_\theta^2 + \frac{p_\varphi^2}{\cos^2\theta} \right) = c^2 \mbox{ and } \cos\theta \ne 0$$ we have $$\frac{d}{dt}\left.\frac{1}{2} \left( p_\theta^2 + \frac{p_\varphi^2}{\cos^2\theta} \right)\right|_{t=0} < 0.$$
\end{lemma}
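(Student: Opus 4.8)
The plan is to observe that Lemma~4 is the exact averaged counterpart of Lemma~2, and that the averaged system \eqref{eq8} is obtained from the modified system \eqref{eq6} by setting $\dot h = 0$ and replacing $\dot h^2$ by the constant $\tfrac{1}{2}a^2(2\pi/T)^2$. Since every term carrying the cutoff $\chi_{c,\delta}$ in \eqref{eq6} contains a single factor $\dot h$, these all disappear under averaging, so no cutoff functions appear in \eqref{eq8} and the computation of Lemma~2 can be transcribed essentially verbatim, with $P_x,P_z$ now read off at the slow argument $s$. I would therefore reproduce the calculation of Lemma~2 with these substitutions.

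Concretely, I would differentiate $E=\tfrac12\left(p_\theta^2+p_\varphi^2/\cos^2\theta\right)$ along \eqref{eq8}, using $\dot\theta=p_\theta$ from the first equation, to get
\begin{equation*}
\frac{d}{dt}E = p_\theta\dot p_\theta + \frac{p_\varphi\dot p_\varphi}{\cos^2\theta} + p_\varphi^2\,\frac{\sin\theta\,p_\theta}{\cos^3\theta},
\end{equation*}
and then substitute $\dot p_\theta,\dot p_\varphi$ from \eqref{eq8}. Three structural features do the work, exactly as in Lemma~2. The viscous terms $-\mu p_\theta$ and $-\mu p_\varphi$ contribute $-\mu p_\theta^2-\mu p_\varphi^2/\cos^2\theta=-2\mu E=-2\mu c^2$ on the prescribed level set. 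The term $-p_\varphi^2\sin\theta/\cos^3\theta$ inside $\dot p_\theta$ cancels against the last summand $p_\varphi^2 p_\theta\sin\theta/\cos^3\theta$ above. Finally, the two potentially singular $\dot h^2$-contributions in $\dot p_\theta$, namely $-\dot h^2\cos^2\varphi\sin\theta/\cos\theta$ (from the bracket $p_\varphi^2+\dot h^2\cos^2\varphi\cos^2\theta$) and $+\dot h^2\sin\theta\cos^2\varphi/\cos\theta$, cancel one another, so that no $1/\cos\theta$ survives in the coefficient of $p_\theta$; the surviving $p_\varphi$-terms carry an explicit $\cos\theta$ that recombines with the factor $1/\cos\theta$ to produce the bounded variable $p_\varphi/\cos\theta$.

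After these cancellations the derivative takes the form $\frac{d}{dt}E=-2\mu c^2 + p_\theta\,A(\theta,\varphi,s) + (p_\varphi/\cos\theta)\,B(\theta,\varphi,s)$, where $A$ and $B$ are finite products of trigonometric functions, the constant $\tfrac12 a^2(2\pi/T)^2$, and the values $P_x(s),P_z(s)$, hence uniformly bounded by some $K=K(\|P_x\|_\infty,\|P_z\|_\infty,a,T)$. On the level set $\tfrac12(p_\theta^2+p_\varphi^2/\cos^2\theta)=c^2$ we have $|p_\theta|\le\sqrt2\,c$ and $|p_\varphi/\cos\theta|\le\sqrt2\,c$, giving $\frac{d}{dt}E\le -2\mu c^2 + \sqrt2\,K\,c$, which is negative once $c>K/(\sqrt2\,\mu)$. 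The only point requiring care is the bookkeeping that produces the cancellation of the $1/\cos\theta$ terms; once that is confirmed the quadratic friction term $-2\mu c^2$ dominates the linear remainder for large $c$, exactly as in Lemma~2, and the choice of $c$ is uniform in $\theta,\varphi,s$.
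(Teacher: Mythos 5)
Your proposal is correct and is essentially the paper's intended argument: the paper states Lemma~4 without a separate proof, as the averaged analogue of Lemma~2, and your transcription of that computation (no cutoff terms survive averaging, the $p_\varphi^2\sin\theta/\cos^3\theta$ and the singular $\dot h^2$-terms cancel, and the friction term $-2\mu c^2$ dominates the remaining terms, which are linear in the bounded quantities $p_\theta$ and $p_\varphi/\cos\theta$) is exactly what that reference to Lemma~2 means. Your bookkeeping is in fact slightly more careful than the paper's, which writes $p_\theta\in[-c,c]$ where $[-\sqrt2\,c,\sqrt2\,c]$ is the correct range, a discrepancy immaterial to the conclusion.
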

Let $M_{c,\delta}$ be the subset of the phase space such that
$$
\cos\theta\sin\varphi \geqslant \delta, \quad \frac{1}{2} \left( p_\theta^2 + \frac{p_\varphi^2}{\cos^2\theta} \right) \leqslant c^2.
  $$
As a corollary from Lemmas 3 and 4 we prove the following result.
\begin{corollary}
For system \eqref{eq8} there exist positive $c$, $\delta$, $d$, $\tau$, $l$ such that for any $(\theta, \varphi, p_\theta, p_\varphi) \in M_{c, \delta}$ satisfying $\mathrm{dist}((\theta, \varphi, p_\theta, p_\varphi), \partial M_{c, \delta}) \leqslant d$ and any $s_0$ we have for some $t \in [0, \tau]$ either
$$
(\theta(t), \varphi(t), p_\theta(t), p_\varphi(t)) \not\in M_{c, \delta}, \quad \mathrm{dist}((\theta(t), \varphi(t), p_\theta(t), p_\varphi(t)), \partial M_{c, \delta}) > l,
$$
or
$$
(\theta(-t), \varphi(-t), p_\theta(-t), p_\varphi(-t)) \not\in M_{c, \delta}, \quad \mathrm{dist}((\theta(-t), \varphi(-t), p_\theta(-t), p_\varphi(-t)), \partial M_{c, \delta}) > l.
$$
Here $\theta(0)=\theta$, $\varphi(0)=\varphi$, $p_\theta(0) = p_\theta$, $p_\varphi(0)=p_\varphi$, and $s(0) = s_0$.
\end{corollary}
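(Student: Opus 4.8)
The plan is to reduce the statement to two uniform transversality estimates on the two faces of $\partial M_{c,\delta}$ and then propagate them to a collar by a compactness-and-continuity argument. Write $g = \cos\theta\sin\varphi$ and $E = \tfrac{1}{2}\bigl(p_\theta^2 + p_\varphi^2/\cos^2\theta\bigr)$, so that $M_{c,\delta} = \{g \ge \delta\} \cap \{E \le c^2\}$ and its boundary splits into the ``top'' face $A = \{g = \delta,\ E \le c^2\}$ and the ``momentum'' face $B = \{g \ge \delta,\ E = c^2\}$. First I would fix $c$ by Lemma~4 and then $\delta$ by Lemma~3, and record that $M_{c,\delta}$ is compact: the constraint $g \ge \delta > 0$ forces $\cos\theta \ge \delta$ and $\sin\varphi \ge \delta$, while $E \le c^2$ bounds the momenta. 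Since $P_x, P_z$ are $T$-periodic, $s$ may be taken in the circle $\mathbb{R}/T\mathbb{Z}$, so the whole extended domain is compact, the vector field of \eqref{eq8} is smooth there, and the speed is bounded by some uniform $K$.

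Second, I would upgrade the strict inequalities of Lemmas~3 and~4 to uniform ones on neighbourhoods. By continuity and compactness there exist $m_A, m_B, \eta > 0$ such that $\ddot g \le -m_A$ whenever $|g - \delta| \le \eta$, $|\dot g| \le \eta$, $E \le c^2 + \eta$, and $\dot E \le -m_B$ whenever $|E - c^2| \le \eta$. I would also fix lower bounds on $|\nabla g|$ on $A$ and $|\nabla E|$ on $B$ (both faces are compact and these gradients are nonvanishing there, as $\sin\varphi \ge \delta$ on $A$ and $E = c^2 > 0$ on $B$), so that a margin in the value of $g$ or $E$ converts to a comparable margin in distance: setting $\psi = \max(\delta - g,\ E - c^2)$, one checks that $\psi(y) \ge \beta > 0$ implies $y \notin M_{c,\delta}$ and $\mathrm{dist}(y, \partial M_{c,\delta}) \ge \beta/L$ for a uniform $L$, regardless of which inequality is violated.

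Third, I would prove fixed-time escape with a uniform margin from every boundary point, in forward or backward time. On $B$ the energy $E$ rises at rate $\ge m_B$ in backward time, so $\psi \ge \tfrac{m_B}{2}\tau$ is reached at some $t \in [0,\tau]$. On $A$ (where $g(0) = \delta$) I would split by $\dot g(0)$: if $\dot g(0) \le 0$ then forward both the nonpositive linear term and the concavity push $g$ down, giving $g(\tau) \le \delta + \dot g(0)\tau - \tfrac{m_A}{2}\tau^2 \le \delta - \tfrac{m_A}{2}\tau^2$; if $\dot g(0) \ge 0$ the same estimate holds in backward time; and when $|\dot g(0)| \ge \eta$ one uses the first-order decay instead, which is even stronger. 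Choosing $\tau$ small enough that the speed bound $K$ keeps the trajectory inside the neighbourhoods on which $\ddot g \le -m_A$ and $\dot E \le -m_B$ hold throughout $[0,\tau]$ (resp. $[-\tau,0]$), every point of $\partial M_{c,\delta}$ attains $\psi \ge 2l'$ at some $|t| \le \tau$, with $2l' := \min\bigl(\tfrac{m_B}{2}\tau,\ \tfrac{m_A}{2}\tau^2\bigr) > 0$.

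Finally, writing $x(\cdot)$ for the solution of \eqref{eq8} with $x(0) = (\theta,\varphi,p_\theta,p_\varphi)$ and $s(0) = s_0$, I would package this as the continuous function $\Phi(x, s_0) = \max\bigl(\max_{t\in[0,\tau]}\psi(x(t)),\ \max_{t\in[0,\tau]}\psi(x(-t))\bigr)$, which is $\ge 2l'$ on the compact set $\partial M_{c,\delta} \times (\mathbb{R}/T\mathbb{Z})$. By continuous dependence on initial data, $\Phi \ge l'$ on a neighbourhood of this set, hence on $\{x \in M_{c,\delta} : \mathrm{dist}(x, \partial M_{c,\delta}) \le d\} \times (\mathbb{R}/T\mathbb{Z})$ for some $d > 0$; taking $l = l'/L$ then gives exactly the two alternatives in the statement. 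I expect the only genuine obstacle to be the tangency locus of $A$ (where $\dot g = 0$ and first-order transversality degenerates): there the escape is intrinsically second-order, so one must invoke the concavity $\ddot g \le -m_A$ from Lemma~3 and accept a margin that is only \emph{quadratic} in $\tau$. This forces $\tau$ to be small, which is harmless since we need $l'$ and $\tau$ merely positive and uniform, but it is the step where the estimates must be arranged so that the trajectory does not leave the neighbourhood on which the second-derivative bound is valid before the margin is realized.
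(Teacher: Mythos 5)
Your proposal is correct and follows essentially the same route as the paper: you split $\partial M_{c,\delta}$ into the momentum face $\{E = c^2\}$ (backward-time escape via Lemma~4) and the face $\{\cos\theta\sin\varphi = \delta\}$ (treated according to the sign of $\frac{d}{dt}(\cos\theta\sin\varphi)$, with the tangency locus escaping through the concavity of Lemma~3), and then invoke compactness and continuous dependence to make $d$, $\tau$, $l$ uniform on a collar --- precisely the paper's decomposition into $\partial M^1_{c,\delta}$, $\partial M^{2,0}_{c,\delta}$, $\partial M^{2,+}_{c,\delta}$, $\partial M^{2,-}_{c,\delta}$. Your version is simply more quantitative (explicit Taylor estimates and the margin function $\Phi$) where the paper argues qualitatively.
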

\begin{proof}
Let $c$ and $\delta$ be constants such that Lemmas 3 and 4 hold and let $\partial M_{c, \delta}^1$ and $\partial M_{c, \delta}^2$ be the following components of the boundary $\partial M_{c, \delta} = \partial M_{c, \delta}^1 \cup \partial M_{c, \delta}^2$
\begin{align*}
   &\partial M_{c, \delta}^1 = \{ (\theta, \varphi, p_\theta, p_\varphi) \in \partial M_{c, \delta} \colon \frac{1}{2} \left( p_\theta^2 + \frac{p_\varphi^2}{\cos^2\theta} \right) = c^2 \},\\
   &\partial M_{c, \delta}^2 = \{ (\theta, \varphi, p_\theta, p_\varphi) \in \partial M_{c, \delta} \colon \cos\theta\sin\varphi = \delta,  \frac{1}{2} \left( p_\theta^2 + \frac{p_\varphi^2}{\cos^2\theta} \right) \leqslant c^2 \}.
\end{align*}
From the compactness of $\partial M_{c, \delta}^1$ and Lemma 4, we obtain that there exist $d$, $\tau$, $l$ such that for any $s_0$ and any initial conditions $\theta(0)=\theta$, $\varphi(0)=\varphi$, $p_\theta(0) = p_\theta$, $\mathrm{dist}((\theta, \varphi, p_\theta, p_\varphi), \partial M_{c, \delta}^1) \leqslant d$, we have for some $t \in [0, \tau]$
 $$
(\theta(-t), \varphi(-t), p_\theta(-t), p_\varphi(-t)) \not\in M_{c, \delta}, \quad \mathrm{dist}((\theta(-t), \varphi(-t), p_\theta(-t), p_\varphi(-t)), \partial M_{c, \delta}) > l.
$$
Let us now consider set $\partial M_{c, \delta}^2$. We can find $t_1$ and $l_1$ such that for any initial conditions satisfying
\begin{align}
\label{eq9}
    \left.\frac{d}{dt}(\cos\theta(t)\sin\varphi(t))\right|_{t=0} = 0
\end{align}
we have
 $$
(\theta(t_1), \varphi(t_1), p_\theta(t_1), p_\varphi(t_1)) \not\in M_{c, \delta}, \quad \mathrm{dist}((\theta(t_1), \varphi(t_1), p_\theta(t_1), p_\varphi(t_1)), \partial M_{c, \delta}) > l_1.
$$
It follows from Lemma 3. Therefore, the same is true for all close initial conditions from $M_{c,\delta}$. We will denote by $\partial M_{c, \delta}^{2,0}$ the set of initial conditions from $\partial M_{c, \delta}^2$ satisfying \eqref{eq9}. Based on the compactness of the considered sets, we can conclude that there exist $d$, $\tau$, $l$ such that for any $s_0$ and any initial conditions $\theta(0)=\theta$, $\varphi(0)=\varphi$, $p_\theta(0) = p_\theta$, $\mathrm{dist}((\theta, \varphi, p_\theta, p_\varphi), \partial M_{c, \delta}^{2,0}) \leqslant d$, we have for some $t \in [0, \tau]$
 $$
(\theta(t), \varphi(t), p_\theta(t), p_\varphi(t)) \not\in M_{c, \delta}, \quad \mathrm{dist}((\theta(t), \varphi(t), p_\theta(t), p_\varphi(t)), \partial M_{c, \delta}) > l.
$$
By $\partial M_{c, \delta}^{2,+}$ we denote the set of points from $\partial M_{c, \delta}^2$ satisfying
\begin{align*}
    \left.\frac{d}{dt}(\cos\theta(t)\sin\varphi(t))\right|_{t=0} > 0.
\end{align*}
Again, there exist $d$, $\tau$, $l$ such that for any $s_0$ and any initial conditions $\theta(0)=\theta$, $\varphi(0)=\varphi$, $p_\theta(0) = p_\theta$ satisfying $\mathrm{dist}((\theta, \varphi, p_\theta, p_\varphi), \partial M_{c, \delta}^{2,0}) > d$ and $\mathrm{dist}((\theta, \varphi, p_\theta, p_\varphi), \partial M_{c, \delta}^{2,+}) \leqslant d$, we have for some $t \in [0, \tau]$
 $$
(\theta(-t), \varphi(-t), p_\theta(-t), p_\varphi(-t)) \not\in M_{c, \delta}, \quad \mathrm{dist}((\theta(-t), \varphi(-t), p_\theta(-t), p_\varphi(-t)), \partial M_{c, \delta}) > l.
$$
Similarly, if $\partial M_{c, \delta}^{2,-}$ is the subset of $\partial M_{c, \delta}^2$ satisfying
\begin{align*}
    \left.\frac{d}{dt}(\cos\theta(t)\sin\varphi(t))\right|_{t=0} < 0.
\end{align*}
Then there exist $d$, $\tau$, $l$ such that for any $s_0$ and any initial conditions $\theta(0)=\theta$, $\varphi(0)=\varphi$, $p_\theta(0) = p_\theta$ satisfying $\mathrm{dist}((\theta, \varphi, p_\theta, p_\varphi), \partial M_{c, \delta}^{2,0}) > d$ and $\mathrm{dist}((\theta, \varphi, p_\theta, p_\varphi), \partial M_{c, \delta}^{2,-}) \leqslant d$, we have for some $t \in [0, \tau]$
 $$
(\theta(t), \varphi(t), p_\theta(t), p_\varphi(t)) \not\in M_{c, \delta}, \quad \mathrm{dist}((\theta(t), \varphi(t), p_\theta(t), p_\varphi(t)), \partial M_{c, \delta}) > l.
$$
\end{proof}

\section{Conclusion}

In conclusion we would like to discuss briefly possible generalizations of the presented result and some open questions. First, a natural question is whether Theorem 1 can be carried over to systems without friction. The answer is positive and the general idea of the proof remains the same, but it is slightly more  complicated technically. Similarly, the result can be generalized to the case when the spherical pendulum is replaced by an arbitrary surface and the mass point is constrained to move on this surface. One can also note that the proof of Theorem still holds (with minor changes) if we replace the homogeneous horizontal force field with a non-homogeneous one, i.e. if we replace functions $P_x(t)$ and $P_z(t)$ with $P_x(t,x,y,z)$ and $P_z(t,x,y,z)$.

More sophisticated questions appear when we study Lyapunov stability of periodic solutions in the considered systems. If sufficient conditions for the existence of an asymptotically stable $T$-periodic solution are needed, then this problem can be reduced to the consideration of the same question for the averaged system: if there exists a $T$-periodic non-falling solution of the averaged system, and all multipliers of this solution are strictly inside the unit circle, then there exists a $T$-periodic solution with the same properties for the original system, provided that $\varepsilon$ is small. If $P(t) \equiv 0$, then one can easily find conditions under which the vertical equilibrium becomes asymptotically stable. It would be interesting to find similar conditions for the case when $P(t) \not\equiv 0$. For instance, one can  consider a relatively simple expressions defining the horizontal force  $P_x(t) = A \cos t$, $P_z(t) = A \sin t$ and try to find a non-trivial correspondence between $A$ and $a$ which guarantees the existence of an asymptotically stable solution. This type of results will complete the generalization of the spherical Kapitza pendulum. We have already proved that there always exists a periodic non-falling solution in the presence of a periodic horizontal force (in case of the Kapitza pendulum, this solution also always exists and it is the vertical equilibrium). The conditions for the asymptotic stability for the Kapitza pendulum are known and it might be useful to find similar conditions for system \eqref{eq5}. Another open problem related to the considered systems is the problem of precise estimation of values of $\varepsilon$ for which there exist a periodic solution. In other words, it would be interesting to know the maximal $\varepsilon$ guaranteeing the existence of a non-falling $T$-periodic solution.
\printbibliography
\end{document}